\newtheorem{theorem}{Theorem}
\newtheorem{lemma}[theorem]{Lemma}
\newtheorem{remark}[theorem]{Remark}
\newcommand\calB{{\mathcal B}}
\newcommand\calF{{\mathcal F}}
\newcommand\calI{{\mathcal I}}
\newcommand\calO{{\mathcal O}}
\newcommand\calT{{\mathcal T}}
\newcommand {\Z}{\mathbb{Z}}
\newcommand{\flip}{\operatorname{flip}}
\definecolor{light-gray}{gray}{0.8}
\begin{document}
\title{A new simple proof of the Aztec diamond theorem}

\author{Manuel Fendler}
\address{Institut f\"ur Mathematik, Carl von Ossietzky Universit\"at Oldenburg, 26111 Oldenburg, Germany}
\email{manuel.fendler@uni-oldenburg.de}

\author{Daniel Grieser}
\address{Institut f\"ur Mathematik, Carl von Ossietzky Universit\"at Oldenburg, 26111 Oldenburg, Germany}
\email{daniel.grieser@uni-oldenburg.de}

\date{\today}
\keywords{domino tilings}
\subjclass[2010]{
05A15 
}

\begin{abstract}
The Aztec diamond of order $n$ is the union of lattice squares in the plane intersecting the square $|x|+|y|<n$. 
The Aztec diamond theorem states that the number of domino tilings of this shape is $2^{n(n+1)/2}$. It was first proved by Elkies, Kuperberg, Larsen and Propp in 1992. We give a new simple proof of this theorem.
\end{abstract}

\maketitle

A {\em domino} is a $1\times2$ rectangle in the plane whose corners are lattice points, i.e.\ have integer coordinates. A {\em domino tiling} of a subset $S$ of the plane is a covering of $S$ by a set of dominoes whose interiors are disjoint. 
The problem of counting the number of domino tilings (or tilings, for short) of a given set $S$ has received much attention in the last 50 years, partly because of its significance in physics, but also because of the many beautiful mathematical structures that have appeared in its study.
For example, the number of domino tilings of a $2\times n$ rectangle is the $n$th Fibonacci number, and there is a rather non-trivial formula for the number of domino tilings of an $m\times n$-rectangle due to Kasteleyn \cite{Kas:SDLINDAQL}, Fisher and Temperley \cite{TemFis:DPSMER}, which for even $m=n$ reads 
$ 2^{n^2/2} \prod\limits_{j,k=1}^{n/2} \left(\cos^2 \frac{j\pi}{n+1} + \cos^2 \frac{k\pi}{n+1}\right)\,. $
One of the amazing facts in this area is that when essentially rotating the square by $45$ degrees the number of tilings is given by a much simpler formula.

More precisely, define the {\em Aztec diamond of order} $n$ by 
$$ A_n = \bigcup \{Q:\, Q\cap \{(x,y):\, |x|+|y|<n\}\neq\emptyset\}$$
where $Q$ ranges over squares $[k,k+1]\times[l,l+1]$ with $k,l\in\Z$. See Figure \ref{aztec diamond}.
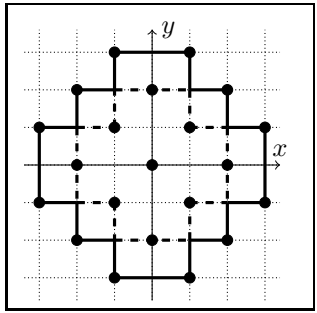
\begin{figure}[ht]
\centering
\fbox{
\begin{tikzpicture}[scale=1]
\draw[->] (-.2,0) -- (3.2,0) node[above] {$x$};
\draw[->] (1.5,-1.8) -- (1.5,1.8) node[right] {$y$};  
\draw[very thick](0,0.5)--(0,-0.5);
\draw[very thick](0,-0.5)--(0.5,-0.5);
\draw[very thick](0.5,-0.5)--(0.5,-1);
\draw[very thick](0.5,-1)--(1,-1);
\draw[very thick](1,-1)--(1,-1.5);
\draw[very thick](1,-1.5)--(2,-1.5);
\draw[very thick](2,-1.5)--(2,-1);
\draw[very thick](2,-1)--(2.5,-1);
\draw[very thick](2.5,-1)--(2.5,-0.5);
\draw[very thick](2.5,-0.5)--(3,-0.5);
\draw[very thick](3,-0.5)--(3,0.5);
\draw[very thick](3,0.5)--(2.5,0.5);
\draw[very thick](2.5,0.5)--(2.5,1);
\draw[very thick](2.5,1)--(2,1);
\draw[very thick](2,1)--(2,1.5);
\draw[very thick](2,1.5)--(1,1.5);
\draw[very thick](1,1.5)--(1,1);
\draw[very thick](1,1)--(0.5,1);
\draw[very thick](0.5,1)--(0.5,0.5);
\draw[very thick](0.5,0.5)--(0,0.5);
\draw[very thick,dashed](.5,.5)--(.5,-.5);
\draw[very thick,dashed](.5,.5)--(1,.5);
\draw[very thick,dashed](.5,-.5)--(1,-.5);
\draw[very thick,dashed](1,.5)--(1,1);
\draw[very thick,dashed](1,-.5)--(1,-1);
\draw[very thick,dashed](1,1)--(2,1);
\draw[very thick,dashed](1,-1)--(2,-1);
\draw[very thick,dashed](2,-1)--(2,-.5);
\draw[very thick,dashed](2,1)--(2,.5);
\draw[very thick,dashed](2,.5)--(2.5,.5);
\draw[very thick,dashed](2,-.5)--(2.5,-.5);
\draw[very thick,dashed](2.5,.5)--(2.5,-.5);
\foreach \i in {0,.5,1,1.5,2,2.5,3}
\draw[densely dotted] (\i,-1.8) -- (\i,1.8);
\foreach \i in {-1.5,-1,-.5,0,.5,1,1.5}
\draw[densely dotted] (-.2,\i) -- (3.2,\i);

\foreach \i/\j in {0/-.5,0/.5, .5/-1,.5/0,.5/1, 1/-1.5,1/-.5,1/.5,1/1.5, 1.5/-1,1.5/0,1.5/1, 2/-1.5,2/-.5,2/.5,2/1.5, 2.5/-1,2.5/0,2.5/1, 3/-.5,3/.5}
\draw[fill=black] (\i,\j) circle (2pt);
\end{tikzpicture}
}
\caption{The Aztec Diamonds of orders 2 (dashed lines) and 3 (heavy lines). The marked points are nodes ($n=2$).}
\label{aztec diamond}
\end{figure}

\begin{theorem}[\cite{ElkKupLarPro:ASMDTI}] \label{theorem}
Denote by $T_n$ the number of domino tilings of $A_n$. Then  
$$ T_n= 2^{n(n+1)/2}\,.$$ 
\end{theorem}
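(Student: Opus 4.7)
The plan is to prove the theorem by induction on $n$, establishing the multiplicative recurrence $T_n = 2^n \, T_{n-1}$. Together with the easily verified base case $T_1 = 2$ (since $A_1$ is the $2 \times 2$ square, which has exactly two domino tilings), this immediately yields $T_n = 2^n \cdot 2^{n-1} \cdots 2^1 = 2^{n(n+1)/2}$, as required.

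For the inductive step, the strategy is to exhibit a $2^n$-to-$1$ surjection from the set of tilings of $A_n$ onto the set of tilings of $A_{n-1}$. The geometric input is the observation that $A_n \setminus A_{n-1}$ is an ``outer layer'' of lattice squares running around $A_{n-1}$ in a staircase pattern, whose boundary with $A_{n-1}$ passes through exactly the marked \emph{nodes} in the figure. A tiling of $A_n$ must either tile this outer layer entirely by itself or involve dominoes that straddle the common boundary between the outer layer and $A_{n-1}$. The plan is to show that the distribution of these two possibilities along the boundary is governed by $n$ independent binary choices, one at each node, accounting for the factor $2^n$.

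Concretely, I would first classify the local configurations of dominoes available at each node and attach to each tiling of $A_n$ an $n$-tuple of binary invariants. Next, I would define a reduction map that strips off the outer layer: after recording the boundary data at each node, the straddling dominoes are replaced by suitable interior dominoes to produce a tiling of $A_{n-1}$. Finally, I would verify that this map has fibers of size exactly $2^n$ by constructing an explicit inverse that reconstructs a tiling of $A_n$ from a tiling of $A_{n-1}$ together with the $n$ binary choices; the inverse should be a purely local prescription at each node.

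The main obstacle lies in making the reduction map clean and self-consistent: naive deletion of the outer layer leaves gaps whenever straddling dominoes are present, so one must prescribe a canonical local rearrangement to fill these gaps without creating ambiguity or dependence on the interior of the tiling. This is the same difficulty addressed by the ``domino shuffling'' of Elkies--Kuperberg--Larsen--Propp, and the novelty of the present simple proof presumably consists in finding a presentation -- possibly via an intermediate combinatorial object pairing node data with an interior tiling -- that decouples the boundary from the bulk and thereby circumvents the intricate bookkeeping of the original argument.
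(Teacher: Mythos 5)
Your global framing (induction on $n$, base case $T_1=2$, recursion $T_n=2^nT_{n-1}$) coincides with the paper's, but the mechanism you propose for the inductive step has a genuine gap, and it is precisely the difficulty the paper's construction is designed to sidestep. You posit a reduction map from tilings of $A_n$ onto tilings of $A_{n-1}$ that is $2^n$-to-one, with the factor $2^n$ realized as independent binary choices attached to $n$ fixed nodes on the boundary layer. No such local prescription exists: straddling dominoes force rearrangements that propagate into the bulk, and, more fundamentally, the binary degrees of freedom are not located at the boundary at all. In the paper's argument, tilings of $A_{n+1}$ and tilings of $A_n$ are both mapped to a common intermediate object, a field of arrows on $A_{n+1}$ (outward pointing for $A_{n+1}$-tilings, inward pointing for $A_n$-tilings), and Lemma \ref{lem1} shows that the fiber over a field $\calF$ has size $2^{r(\calF)}$, where $r(\calF)$ is the number of \emph{repelling} nodes --- a quantity that varies from field to field, and whose witnessing free $2\times2$ blocks can sit anywhere in the diamond. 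The constant factor emerges only after this grouping: reversing all arrows gives a bijection $\flip\colon\calO\to\calI$ exchanging attracting and repelling nodes, and Lemma \ref{lem2} (a direction-change count along the $n+1$ diagonals of interior nodes, where each arrow sequence starts backward and ends forward) yields $r-a=n+1$, hence $r(\calF)=r(\flip(\calF))+n+1$ and a ratio of exactly $2^{n+1}$ between the paired fibers. In other words, the fibers of any ``strip off the outer layer'' correspondence have sizes $2^r$ with $r$ variable; only the \emph{ratio} across the flip is constant.

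You did flag the obstacle yourself (a canonical rearrangement ``without ambiguity or dependence on the interior'') and even guessed that an intermediate combinatorial object is needed, but you supply neither the object, nor the invariant $r$, nor the counting lemma that converts variable fiber sizes into a constant multiplicative factor; those three items are the entire content of the proof. As written, your step ``verify that this map has fibers of size exactly $2^n$ by constructing an explicit inverse \dots\ a purely local prescription at each node'' would fail: already for small $n$, different tilings of $A_n$ admit different numbers of $2\times2$ blocks in which a horizontal/vertical swap is possible, so the free binary choices cannot be pinned to a fixed set of $n$ boundary nodes, and any constant-fiber map would have to be assembled from the variable-fiber data by exactly the kind of grouping argument you have not provided. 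The proposal is therefore a plan whose central device is missing, not a proof.
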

See also Remark \ref{remark} for a refinement.
The first four proofs of this formula were given in \cite{ElkKupLarPro:ASMDTI}, \cite{ElkKupLarPro:ASMDTII}, later other proofs appeared in  \cite{BruKir:ADDHDSN}, \cite{EuFu:SPADT} (these two proofs are essentially identical) and \cite{Kok:DTADS}. 
See \cite{KenOko:WID} and references given there for a more refined discussion of domino tilings, and \cite{Lai:GADTI} for a recent generalization of the theorem.
\medskip

In this note we give a proof of this theorem which is inspired by the first proof in \cite{ElkKupLarPro:ASMDTI}, but simplifies it in various respects, for example we replace the height function arguments used there by a very simple direct construction, see Lemma \ref{lem1}. 

We use induction on $n$. Clearly $T_1=2$, so in order to prove the theorem it suffices to show that
\begin{equation}
 \label{eqn:ind step}
T_{n+1} = 2^{n+1} T_n\,.
\end{equation}
We need a few preparations before we give the proof of this recursion.
We fix $n$ throughout. The strategy is to associate a 'field of arrows' to each tiling, and then to relate the fields of arrows arising from $A_n$ tilings to those arising from $A_{n+1}$ tilings.
\subsubsection*{Nodes and lattice squares}
Consider the two Aztec diamonds $A_n\subset A_{n+1}$. We call a lattice point $(i,j)\in A_{n+1}$ satisfying $i+j\equiv n\mod 2$ a {\em node}. In particular, the extreme points of $A_{n+1}$ are nodes. All other nodes are contained in $A_n$ and are called {\em interior nodes}. See Figure \ref{aztec diamond} for the case $n=2$. A {\em lattice square} will be a $1\times1$ square contained in $A_{n+1}$ whose corners are lattice points. A {\em boundary square} is a lattice square contained in $A_{n+1}$ but not in $A_n$.
Notice that each lattice square has exactly two corners which are nodes, and each interior node is adjacent to four lattice squares.

\subsubsection*{Fields of arrows}
Suppose in each lattice square of $A_{n+1}$ we draw an arrow pointing from one corner node to the other.
We call this collection of arrows a {\em field of arrows} if it satisfies the following condition. 
\begin{quote}
 \emph{\textbf{Arrow field condition:} 
Each interior node $N$ is either:
\begin{itemize}
 \item {\em attracting:} all adjacent arrows point towards $N$, or
 \item {\em repelling:} all adjacent arrows point away from $N$, or
 \item {\em transient:} any two collinear arrows adjacent to $N$ point in the same direction. 
\end{itemize}
 }
\end{quote}
See Figure \ref{local arrow patterns} for the six possible local arrow patterns. 
We call a field of arrows {\em pointing outward/inward} if all arrows in boundary squares point outward/inward, see Figure \ref{tilings+arrows} (B) and (C) for examples.

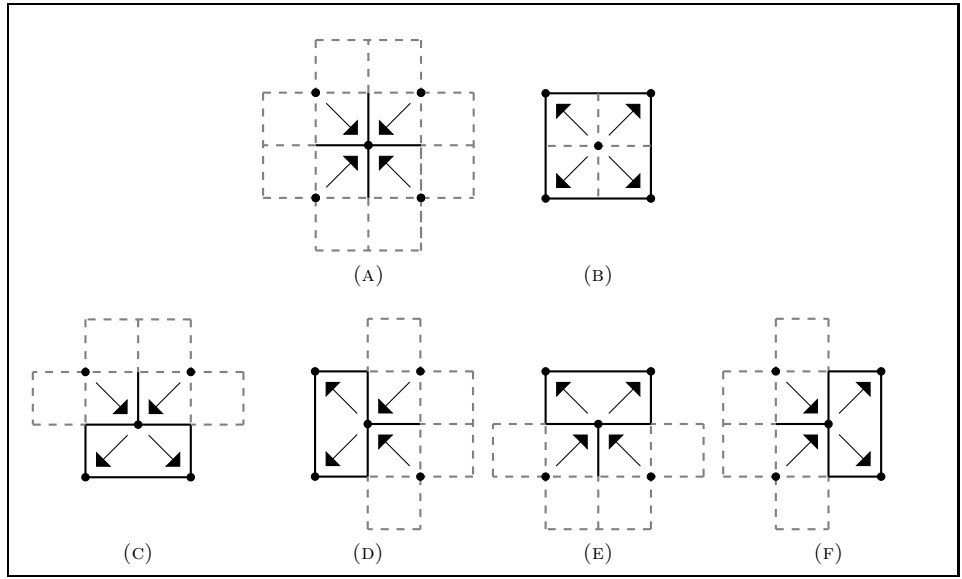
\begin{figure}[ht]
\fbox{\parbox{\dimexpr \linewidth - 2\fboxrule - 2\fboxsep}{\centering
\subfloat[]{
\begin{tikzpicture}[scale=.7]
\draw[-{triangle 90}](-.8,-.8)--(-.2,-.2);
\draw[-{triangle 90}](.8,.8)--(.2,.2);
\draw[-{triangle 90}](-.8,.8)--(-.2,.2);
\draw[-{triangle 90}](.8,-.8)--(.2,-.2);

\draw[thick] (1,0)--(-1,0);
\draw[thick] (0,1)--(0,-1);

\draw[fill=black] (0,0) circle (2pt);
\draw[fill=black] (1,1) circle (2pt);
\draw[fill=black] (-1,-1) circle (2pt);
\draw[fill=black] (-1,1) circle (2pt);
\draw[fill=black] (1,-1) circle (2pt);

\draw[gray,thick,dashed](0,1)--(0,2);
\draw[gray,thick,dashed](-1,2)--(1,2);
\draw[gray,thick,dashed](1,2)--(1,-2);
\draw[gray,thick,dashed](-2,1)--(2,1);
\draw[gray,thick,dashed](2,1)--(2,-1);
\draw[gray,thick,dashed](2,0)--(1,0);
\draw[gray,thick,dashed](0,-1)--(0,-2);
\draw[gray,thick,dashed](-1,-2)--(1,-2);
\draw[gray,thick,dashed](1,-2)--(1,0);
\draw[gray,thick,dashed](-2,-1)--(2,-1);
\draw[gray,thick,dashed](-1,2)--(-1,-2);
\draw[gray,thick,dashed](-2,1)--(-2,-1);
\draw[gray,thick,dashed](-1,0)--(-2,0);

\draw[fill=black] (0,0) circle (2pt);
\draw[fill=black] (1,1) circle (2pt);
\draw[fill=black] (-1,-1) circle (2pt);
\draw[fill=black] (-1,1) circle (2pt);
\draw[fill=black] (1,-1) circle (2pt);
\end{tikzpicture}
}
\subfloat[]{
\begin{tikzpicture}[scale=.7]
\draw[white](-2,0)--(2,0);
\draw[white](0,-2)--(0,2);

\draw[-{triangle 90}](-.2,-.2)--(-.8,-.8);
\draw[-{triangle 90}](.2,.2)--(.8,.8);
\draw[-{triangle 90}](-.2,.2)--(-.8,.8);
\draw[-{triangle 90}](.2,-.2)--(.8,-.8);

\draw[thick] (1,1)--(-1,1);
\draw[thick] (1,1)--(1,-1);
\draw[thick] (1,-1)--(-1,-1);
\draw[thick] (-1,1)--(-1,-1);
\draw[thick,gray,dashed] (1,0)--(-1,0);
\draw[thick,gray,dashed] (0,1)--(0,-1);

\draw[fill=black] (0,0) circle (2pt);
\draw[fill=black] (1,1) circle (2pt);
\draw[fill=black] (-1,-1) circle (2pt);
\draw[fill=black] (-1,1) circle (2pt);
\draw[fill=black] (1,-1) circle (2pt);
\end{tikzpicture}
}

\subfloat[]{
\begin{tikzpicture}[scale=.7]
\draw[white](-2,0)--(2,0);
\draw[white](0,-2)--(0,2);

\draw[{triangle 90}-](-.8,-.8)--(-.2,-.2);
\draw[-{triangle 90}](.8,.8)--(.2,.2);
\draw[-{triangle 90}](-.8,.8)--(-.2,.2);
\draw[{triangle 90}-](.8,-.8)--(.2,-.2);

\draw[thick] (1,0)--(-1,0);
\draw[thick] (0,1)--(0,0);
\draw[thick] (1,-1)--(-1,-1);
\draw[thick] (1,-1)--(1,0);
\draw[thick] (-1,-1)--(-1,0);

\draw[gray,dashed,thick](0,2)--(0,1);
\draw[gray,dashed,thick](1,2)--(-1,2);
\draw[gray,dashed,thick](2,1)--(-2,1);
\draw[gray,dashed,thick](1,2)--(1,0);
\draw[gray,dashed,thick](-1,2)--(-1,0);
\draw[gray,dashed,thick](2,1)--(2,0);
\draw[gray,dashed,thick](-2,1)--(-2,0);
\draw[gray,dashed,thick](2,0)--(1,0);
\draw[gray,dashed,thick](-2,0)--(-1,0);

\draw[fill=black] (0,0) circle (2pt);
\draw[fill=black] (1,1) circle (2pt);
\draw[fill=black] (-1,-1) circle (2pt);
\draw[fill=black] (-1,1) circle (2pt);
\draw[fill=black] (1,-1) circle (2pt);
\end{tikzpicture}
}
\subfloat[]{
\begin{tikzpicture}[scale=.7]
\draw[white](-2,0)--(2,0);
\draw[white](0,-2)--(0,2);

\draw[{triangle 90}-](-.8,-.8)--(-.2,-.2);
\draw[-{triangle 90}](.8,.8)--(.2,.2);
\draw[{triangle 90}-](-.8,.8)--(-.2,.2);
\draw[-{triangle 90}](.8,-.8)--(.2,-.2);

\draw[thick] (1,0)--(0,0);
\draw[thick] (0,1)--(0,-1);
\draw[thick] (-1,1)--(-1,-1);
\draw[thick] (-1,-1)--(0,-1);
\draw[thick] (0,1)--(-1,1);

\draw[gray,dashed,thick](2,0)--(1,0);
\draw[gray,dashed,thick](2,1)--(2,-1);
\draw[gray,dashed,thick](1,2)--(1,-2);
\draw[gray,dashed,thick](2,1)--(0,1);
\draw[gray,dashed,thick](2,-1)--(0,-1);
\draw[gray,dashed,thick](1,2)--(0,2);
\draw[gray,dashed,thick](1,-2)--(0,-2);
\draw[gray,dashed,thick](0,2)--(0,1);
\draw[gray,dashed,thick](0,-2)--(0,-1);

\draw[fill=black] (0,0) circle (2pt);
\draw[fill=black] (1,1) circle (2pt);
\draw[fill=black] (-1,-1) circle (2pt);
\draw[fill=black] (-1,1) circle (2pt);
\draw[fill=black] (1,-1) circle (2pt);
\end{tikzpicture}
}
\subfloat[]{
\begin{tikzpicture}[scale=.7]
\draw[white](-2,0)--(2,0);
\draw[white](0,-2)--(0,2);

\draw[-{triangle 90}](-.8,-.8)--(-.2,-.2);
\draw[{triangle 90}-](.8,.8)--(.2,.2);
\draw[{triangle 90}-](-.8,.8)--(-.2,.2);
\draw[-{triangle 90}](.8,-.8)--(.2,-.2);

\draw[thick] (1,0)--(-1,0);
\draw[thick] (0,0)--(0,-1);
\draw[thick] (1,1)--(-1,1);
\draw[thick] (-1,1)--(-1,0);
\draw[thick] (1,1)--(1,0);

\draw[gray,dashed,thick](0,-2)--(0,-1);
\draw[gray,dashed,thick](1,-2)--(-1,-2);
\draw[gray,dashed,thick](2,-1)--(-2,-1);
\draw[gray,dashed,thick](1,-2)--(1,0);
\draw[gray,dashed,thick](-1,-2)--(-1,0);
\draw[gray,dashed,thick](2,-1)--(2,0);
\draw[gray,dashed,thick](-2,-1)--(-2,0);
\draw[gray,dashed,thick](2,0)--(1,0);
\draw[gray,dashed,thick](-2,0)--(-1,0);

\draw[fill=black] (0,0) circle (2pt);
\draw[fill=black] (1,1) circle (2pt);
\draw[fill=black] (-1,-1) circle (2pt);
\draw[fill=black] (-1,1) circle (2pt);
\draw[fill=black] (1,-1) circle (2pt);
\end{tikzpicture}
}
\subfloat[]{
\begin{tikzpicture}[scale=.7]
\draw[white](-2,0)--(2,0);
\draw[white](0,-2)--(0,2);

\draw[-{triangle 90}](-.8,-.8)--(-.2,-.2);
\draw[{triangle 90}-](.8,.8)--(.2,.2);
\draw[-{triangle 90}](-.8,.8)--(-.2,.2);
\draw[{triangle 90}-](.8,-.8)--(.2,-.2);

\draw[gray,dashed,thick](-2,0)--(-1,0);
\draw[gray,dashed,thick](-2,1)--(-2,-1);
\draw[gray,dashed,thick](-1,2)--(-1,-2);
\draw[gray,dashed,thick](-2,1)--(0,1);
\draw[gray,dashed,thick](-2,-1)--(0,-1);
\draw[gray,dashed,thick](-1,2)--(0,2);
\draw[gray,dashed,thick](-1,-2)--(0,-2);
\draw[gray,dashed,thick](0,2)--(0,1);
\draw[gray,dashed,thick](0,-2)--(0,-1);

\draw[thick] (0,0)--(-1,0);
\draw[thick] (0,1)--(0,-1);
\draw[thick] (1,-1)--(1,1);
\draw[thick] (1,-1)--(0,-1);
\draw[thick] (1,1)--(0,1);

\draw[fill=black] (0,0) circle (2pt);
\draw[fill=black] (1,1) circle (2pt);
\draw[fill=black] (-1,-1) circle (2pt);
\draw[fill=black] (-1,1) circle (2pt);
\draw[fill=black] (1,-1) circle (2pt);
\end{tikzpicture}
}
}}

\caption{The six possible local arrow patterns, and how they arise from domino tilings. The precise position of the dominoes on dashed lines is inessential.}
\label{local arrow patterns}
\end{figure}

\subsubsection*{From tilings to fields of arrows}
We now associate fields of arrows to tilings. More precisely, we define an outward (resp.\ inward) pointing field of arrows $F(\calT)$ on $A_{n+1}$ for each tiling $\calT$ of $A_{n+1}$ (resp.\ $A_{n}$). 
This is done domino by domino by the rule indicated in Figure \ref{tilings+arrows}(A).\footnote{Note that the field of arrows is always defined on all of $A_{n+1}$, even for a tiling of $A_n$.}

More explicitly, consider first a tiling $\calT$ of $A_{n+1}$. Define $F(\calT)$ as follows:
Each domino in the tiling has exactly two corners which are nodes. The two arrows contained in that domino are chosen to point towards these corner nodes. Since each boundary domino has a corner node lying on the boundary of $A_{n+1}$, all arrows in boundary squares point outward. Also, 
by looking at the ways that dominoes can lie adjacent to any interior node, we see that the arrow field condition is satisfied. See Figure \ref{local arrow patterns}, and Figure \ref{tilings+arrows} (B) for an example.

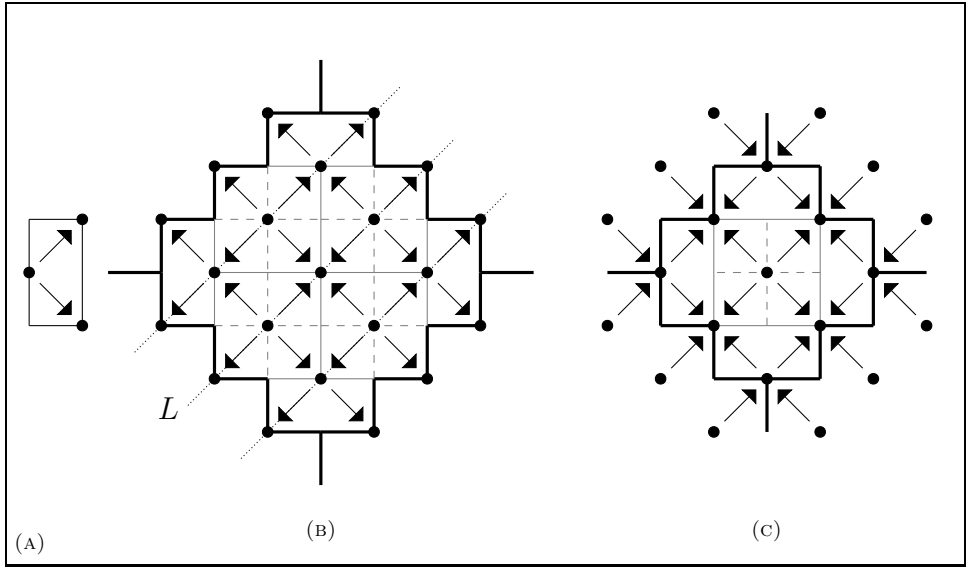
\begin{figure}[ht]
\fbox{\parbox{\dimexpr \linewidth - 2\fboxrule - 2\fboxsep}{\centering
\subfloat[]{
\begin{tikzpicture}[scale=1]
\foreach \i/\j in {0/0,.5/-.5}
 \draw[white,rotate=45]($(0,0)!\i!(-.1,-.1)$){(\i-2.2,\j-2.2)--(\i+2.2,\j+2.2)};

\draw[rotate=45](.5,.5)--(-.5,-.5);
\draw[rotate=45](.5,.5)--(1,0);
\draw[rotate=45](-.5,-.5)--(0,-1);
\draw[rotate=45](1,0)--(0,-1);

\draw[fill=black,rotate=45] (0,0) circle (2pt);
\draw[fill=black,rotate=45] (1,0)  circle (2pt);
\draw[fill=black,rotate=45] (0,-1)  circle (2pt);
\draw[-{triangle 90},rotate=45](.2,0)--(.8,0);
\draw[-{triangle 90},rotate=45](0,-.2)--(0,-.8);
\end{tikzpicture}
}
\subfloat[]{
\begin{tikzpicture}[scale=1]
\foreach \i/\j in {0/0,.5/-.5,1/-1,1.5/-1.5,2/-2,2.5/-2.5,3/-3,3.5/-3.5,4/-4}
 \draw[white,rotate=45]($(0,0)!\i!(-.1,-.1)$){(\i-2.2,\j-2.2)--(\i+2.2,\j+2.2)};

\foreach \i in {1,2,3,4}
 \draw[very thick,rotate=45] ($(0,0)!\i!(-.1,-.1)$){(\i,0)--(\i-.5,-.5)};
\foreach \i in {0,1,2,3}
 \draw[very thick,rotate=45] ($(0,0)!\i!(-.1,-.1)$){(\i,0)--(\i+.5,-.5)};
\foreach \i in {-1,-2,-3}
 \draw[very thick,rotate=45] ($(0,0)!\i!(-.1,-.1)$){(0,\i)--(.5,\i-.5)};
\foreach \i in {-1,-2,-3,-4}
 \draw[very thick,rotate=45] ($(0,0)!\i!(-.1,-.1)$){(0,\i)--(.5,\i+.5)};
\foreach \i in {1,2,3}
 \draw[very thick,rotate=45] ($(0,0)!\i!(-.1,-.1)$){(\i+.5,-3.5)--(\i,-4)};
\foreach \i in {0,1,2,3}
 \draw[very thick,rotate=45] ($(0,0)!\i!(-.1,-.1)$){(\i+.5,-3.5)--(\i+1,-4)};
\foreach \i in {-1,-2,-3}
 \draw[very thick,rotate=45] ($(0,0)!\i!(-.1,-.1)$){(3.5,\i+.5)--(4,\i)};
\foreach \i in {-2,-3,-4}
 \draw[very thick,rotate=45] ($(0,0)!\i!(-.1,-.1)$){(3.5,\i+.5)--(4,\i+1)};
\draw[{triangle 90}-,rotate=45](2,-1.8)--(2,-1.2);
\draw[{triangle 90}-,rotate=45](1.2,-1)--(1.8,-1);
\draw[-{triangle 90},rotate=45](3,-1.8)--(3,-1.2);
\draw[-{triangle 90},rotate=45](2.2,-1)--(2.8,-1);
\draw[{triangle 90}-,rotate=45](1,-1.2)--(1,-1.8);
\draw[{triangle 90}-,rotate=45](1.8,-2)--(1.2,-2);
\draw[-{triangle 90},rotate=45](1,-2.2)--(1,-2.8);
\draw[-{triangle 90},rotate=45](1.8,-3)--(1.2,-3);
\draw[-{triangle 90},rotate=45](2,-2.8)--(2,-2.2);
\draw[-{triangle 90},rotate=45](2.8,-2)--(2.2,-2);
\draw[-{triangle 90},rotate=45](3,-2.2)--(3,-2.8);
\draw[-{triangle 90},rotate=45](2.2,-3)--(2.8,-3);
\draw[-{triangle 90},rotate=45](1,-.8)--(1,-.2);
\draw[-{triangle 90},rotate=45](2,-.8)--(2,-.2);
\draw[-{triangle 90},rotate=45](3,-.8)--(3,-.2);
\draw[-{triangle 90},rotate=45](.8,-1)--(.2,-1);
\draw[-{triangle 90},rotate=45](.8,-2)--(.2,-2);
\draw[-{triangle 90},rotate=45](.8,-3)--(.2,-3);
\draw[-{triangle 90},rotate=45](3.2,-1)--(3.8,-1);
\draw[-{triangle 90},rotate=45](3.2,-2)--(3.8,-2);
\draw[-{triangle 90},rotate=45](3.2,-3)--(3.8,-3);
\draw[-{triangle 90},rotate=45](1,-3.2)--(1,-3.8);
\draw[-{triangle 90},rotate=45](2,-3.2)--(2,-3.8);
\draw[-{triangle 90},rotate=45](3,-3.2)--(3,-3.8);
\draw[dashed,gray,rotate=45](1.5,-.5)--(2.5,-1.5);
\draw[gray,rotate=45](2.5,-.5)--(3.5,-1.5);
\draw[dashed,gray,rotate=45](2.5,-.5)--(1.5,-1.5);
\draw[gray,rotate=45](3,-1)--(1,-3);
\draw[dashed,gray,rotate=45](.5,-1.5)--(1.5,-2.5);
\draw[gray,rotate=45](.5,-2.5)--(1.5,-3.5);
\draw[gray,rotate=45](2,-2)--(3,-3);
\draw[gray,rotate=45](2.5,-3.5)--(3.5,-2.5);
\draw[gray,rotate=45](.5,-1.5)--(1.5,-.5); 
\draw[dashed,gray,rotate=45](1.5,-2.5)--(2.5,-3.5);
\draw[dashed,gray,rotate=45](1.5,-3.5)--(2.5,-2.5);
\draw[dashed,gray,rotate=45](2.5,-1.5)--(3.5,-2.5);
\draw[dashed,gray,rotate=45](2.5,-2.5)--(3.5,-1.5);
\draw[gray,rotate=45](1,-1)--(2,-2);
\draw[dashed,gray,rotate=45](1.5,-1.5)--(.5,-2.5);
\foreach \i/\j in {1/0,2/0,3/0,0/-1,1/-1,2/-1,3/-1,4/-1,0/-2,1/-2,2/-2,3/-2,4/-2,0/-3,1/-3,2/-3,3/-3,4/-3,1/-4,2/-4,3/-4}
 \draw[fill=black,rotate=45] ($(0,0)!\i!(-.1,-.1)$){(\i,\j)} circle (2pt);
 
\node at (.8,-1.8) {{\Large $L$}};

\draw[densely dotted,rotate=45](-.5,-2)--(4.5,-2);
\draw[densely dotted,rotate=45](-.5,-1)--(4.5,-1);
\draw[densely dotted,rotate=45](-.5,-3)--(4.5,-3);

\end{tikzpicture}
}
\subfloat[]{
\begin{tikzpicture}[scale=1]
\foreach \i/\j in {0/0,.5/-.5,1/-1,1.5/-1.5,2/-2,2.5/-2.5,3/-3,3.5/-3.5,4/-4}
 \draw[white,rotate=45]($(0,0)!\i!(-.1,-.1)$){(\i-2.2,\j-2.2)--(\i+2.2,\j+2.2)};

\foreach \i in {1,2,3}
 \draw[very thick,rotate=45] ($(0,0)!\i!(-.1,-.1)$){(\i+.5,-.5)--(\i,-1)};
\foreach \i in {0,1,2}
 \draw[very thick,rotate=45] ($(0,0)!\i!(-.1,-.1)$){(\i+.5,-.5)--(\i+1,-1)};
\foreach \i in {-1,-2}
 \draw[very thick,rotate=45] ($(0,0)!\i!(-.1,-.1)$){(.5,\i-.5)--(1,\i-1)};
\foreach \i in {-1,-2,-3}
 \draw[very thick,rotate=45] ($(0,0)!\i!(-.1,-.1)$){(.5,\i-.5)--(1,\i)};
\foreach \i in {1,2}
 \draw[very thick,rotate=45] ($(0,0)!\i!(-.1,-.1)$){(\i+1,-3)--(\i+.5,-3.5)};
\foreach \i in {0,1,2}
 \draw[very thick,rotate=45] ($(0,0)!\i!(-.1,-.1)$){(\i+1,-3)--(\i+1.5,-3.5)};
\foreach \i in {-1,-2}
 \draw[very thick,rotate=45] ($(0,0)!\i!(-.1,-.1)$){(3,\i)--(3.5,\i-.5)};
\foreach \i in {-2,-3}
 \draw[very thick,rotate=45] ($(0,0)!\i!(-.1,-.1)$){(3,\i)--(3.5,\i+.5)};

\draw[gray,rotate=45](1,-2)--(2,-3);
\draw[gray,rotate=45](2,-1)--(3,-2);
\draw[gray,rotate=45](2,-3)--(3,-2);
\draw[dashed,gray,rotate=45](1.5,-2.5)--(2.5,-1.5);
\draw[dashed,gray,rotate=45](2.5,-2.5)--(1.5,-1.5);
\draw[gray,rotate=45](1,-2)--(2,-1);

\draw[{triangle 90}-,rotate=45](1,-1.8)--(1,-1.2);
\draw[-{triangle 90},rotate=45](1,-.2)--(1,-.8);
\draw[-{triangle 90},rotate=45](.2,-1)--(.8,-1);
\draw[{triangle 90}-,rotate=45](1.8,-1)--(1.2,-1);
\draw[-{triangle 90},rotate=45](2,-.2)--(2,-.8);
\draw[-{triangle 90},rotate=45](2.8,-1)--(2.2,-1);
\draw[-{triangle 90},rotate=45](3,-.2)--(3,-.8);
\draw[-{triangle 90},rotate=45](3.8,-1)--(3.2,-1);
\draw[-{triangle 90},rotate=45](1,-2.8)--(1,-2.2);
\draw[-{triangle 90},rotate=45](.2,-2)--(.8,-2);
\draw[-{triangle 90},rotate=45](1,-3.8)--(1,-3.2);
\draw[-{triangle 90},rotate=45](.2,-3)--(.8,-3);
\draw[{triangle 90}-,rotate=45](2,-1.2)--(2,-1.8);
\draw[{triangle 90}-,rotate=45](1.2,-2)--(1.8,-2);
\draw[-{triangle 90},rotate=45](2,-2.2)--(2,-2.8);
\draw[-{triangle 90},rotate=45](2,-3.8)--(2,-3.2);
\draw[-{triangle 90},rotate=45](1.2,-3)--(1.8,-3);
\draw[-{triangle 90},rotate=45](2.8,-3)--(2.2,-3);
\draw[-{triangle 90},rotate=45](3,-1.2)--(3,-1.8);
\draw[-{triangle 90},rotate=45](3,-2.8)--(3,-2.2);
\draw[-{triangle 90},rotate=45](2.2,-2)--(2.8,-2);
\draw[-{triangle 90},rotate=45](3.8,-2)--(3.2,-2);
\draw[-{triangle 90},rotate=45](3,-3.8)--(3,-3.2);
\draw[-{triangle 90},rotate=45](3.8,-3)--(3.2,-3);
\foreach \i/\j in {1/0,2/0,3/0,0/-1,1/-1,2/-1,3/-1,4/-1,0/-2,1/-2,2/-2,3/-2,4/-2,0/-3,1/-3,2/-3,3/-3,4/-3,1/-4,2/-4,3/-4}
 \draw[fill=black,rotate=45] ($(0,0)!\i!(-.1,-.1)$){(\i,\j)} circle (2pt);

\end{tikzpicture}
}}
}
\caption{(A) A single domino with nodes and arrows; 
(B) resp.\ (C) Outward resp.\ inward pointing field of arrows $(n=2)$, and  tilings of $A_{n+1}$ resp.\ $A_n$ from which they arise. In the $2\times2$ squares containing a dashed cross, the position of dominoes can be either horizontal or vertical.
}
\label{tilings+arrows}
\end{figure}

For a tiling $\calT$ of $A_n$ define $F(\calT)$ by putting arrows in the lattice squares of $A_n$ by the same rule as for $A_{n+1}$, and in addition putting inward pointing arrows in all boundary squares of $A_{n+1}$.
Then the arrow field condition is satisfied for the same reason as before, since the arrows in boundary squares may be thought of as arising from horizontal dominoes added along the outside boundary of $A_n$.

The core of the argument is the proof of the following lemma. Denote by 
$r(\calF)$ the number of repelling nodes for a field of arrows $\calF$.
\begin{lemma}[Number of tilings for a fixed field of arrows]\label{lem1}
Given an inward (resp.\ outward) pointing field of arrows $\calF$, there are precisely $2^{r(\calF)}$ domino tilings $\calT$ of $A_{n}$ (resp.\ $A_{n+1}$)
satisfying $F(\calT)=\calF$.
\end{lemma}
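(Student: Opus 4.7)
The plan is to give a direct construction of all tilings realizing a given arrow field $\calF$, by grouping lattice squares according to what I will call the \emph{internal node} of the prospective domino containing each square. For any domino $D$, observe that $\partial D$ contains exactly three lattice nodes: the two corner nodes of $D$ and one further node lying in the interior of one of the two long edges of $D$; call this third node the internal node of $D$. Each of the two squares $S$ making up $D$ has exactly two node corners (diagonally opposite), and precisely one of them is a corner of $D$ while the other is the internal node of $D$; by the definition of $F(\calT)$, the arrow drawn in $S$ points to the former. Accordingly, from $\calF$ alone one can define $\iota(S)$ for every lattice square $S$ to be the node corner of $S$ to which the arrow in $S$ does \emph{not} point: if $\calT$ is any tiling with $F(\calT)=\calF$, then $\iota(S)$ must equal the internal node of the domino of $\calT$ containing $S$.

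I now partition the squares by $\iota$ and reconstruct dominoes. For each interior node $v$, the set $\{S : \iota(S) = v\}$ consists of those squares adjacent to $v$ whose arrow points \emph{away} from $v$. By the arrow field condition this set has size $0$, $2$, or $4$ as $v$ is attracting, transient, or repelling. A case check of the four transient patterns in Figure \ref{local arrow patterns} shows that at a transient node the two such squares always share an edge having $v$ as an endpoint, and the arrows in them are oriented so that their union is a legitimate domino with internal node $v$ -- this domino is forced. At a repelling node the four such squares constitute the $2\times2$ block around $v$, which admits exactly two domino tilings (two horizontal or two vertical), both producing dominoes with internal node $v$. A direct computation confirms that the arrows drawn in the four squares are \emph{the same} in both of these local tilings, so both are compatible with $\calF$.

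The construction is now automatic: at each transient interior node take the forced local domino; at each repelling interior node make an independent binary choice between the two local tilings. Because each lattice square $S$ in $A_{n+1}$ (respectively in $A_n$, for the inward case) has $\iota(S)$ equal to an interior node of $A_{n+1}$ and is covered by a unique domino in the construction, this yields a valid tiling -- in the inward case the added inward arrows on the boundary squares of $A_{n+1}$ play the role of arrows from fictitious outside-boundary dominos, exactly as explained before the lemma. Conversely, any tiling $\calT$ with $F(\calT)=\calF$ must group its dominoes this way, since the internal node of each of its dominoes is forced by $\calF$; only the horizontal-versus-vertical alternative at each repelling node is left open. Since the choices at distinct repelling nodes are independent and each contributes a factor of $2$, there are precisely $2^{r(\calF)}$ tilings.

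The main obstacle is the local case analysis at transient interior nodes -- verifying for each of the four transient patterns that the two ``outgoing'' squares are edge-adjacent along an edge through $v$ and that the arrows in them force them to be a single domino with internal node $v$. This step is elementary but essential, as it is what eliminates any local freedom at transient nodes and confines all ambiguity to the binary choices at repelling nodes, yielding the clean $2^{r(\calF)}$ count.
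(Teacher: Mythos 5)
Your proposal is correct and takes essentially the same route as the paper: grouping the lattice squares by the node $\iota(S)$ that the arrow avoids produces exactly the paper's decomposition of $A_{n+1}$ (resp.\ $A_n$) into components cut along the two square-sides lying in the direction of each arrow, and your case analysis at transient and repelling nodes is precisely the paper's Claim that each component is a forced $1\times 2$ rectangle or a $2\times 2$ square centered at a repelling node, with only the horizontal/vertical choice free in the latter. The difference is purely bookkeeping --- a partition indexed by interior nodes versus cutting along bold edges --- and your converse and inward-case arguments match the paper's as well.
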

In particular, every outward (resp.\ inward) pointing field of arrows arises from some tiling of $A_{n+1}$ (resp.\ $A_{n}$).
\begin{proof}
First consider an outward pointing field of arrows $\calF$. 
For each lattice square, draw the two sides of the square which lie in the direction of its arrow
in bold face, see Figure \ref{fig4}.
Note that if $\calF$ comes from a tiling then these bold face lines must be boundary lines of dominoes.

The union $\calB$ of all these bold lines divides $A_{n+1}$ into components
(the connected components of $A_{n+1}\setminus \calB$).

\underline{Claim:} Each component is either a $1\times2$ rectangle or a $2\times2$ square, where the latter are precisely those $2\times2$ squares having repelling nodes at their center.

\underline{Proof of Claim:} Consider a lattice square $S$. W.l.o.g. assume that the lower left corner $P$ and the upper right corner $Q$ of $S$ are nodes, and that the arrow of $S$ points towards $P$. Since $\calF$ is outward pointing, the node $Q$ must be an interior node. It can be either repelling or transient. If it is repelling then the component containing $S$ is the $2\times2$ square centered at $Q$ (Figure \ref{fig4}(A)). If it is transient then there are two possibilities for the arrows adjacent to $Q$, and in both cases the component containing $S$ is a $1\times2$ rectangle (Figure \ref{fig4}(B) and (C)).
This proves the claim.
\smallskip

The claim implies that $\calF$ determines the tilings $\calT$ having $F(\calT)=\calF$, except for the choice of having two vertical or two horizontal dominoes in each $2\times2$ square. Conversely, any tiling
that fits into the decomposition
 will have $\calF$ as field of arrows. Since the number of $2\times2$ squares is $r(\calF)$, the lemma is proved in the case of outward pointing $\calF$.
\smallskip

Now consider an inward pointing field of arrows $\calF$. Define $\calB$ as above. Since $\calF$ is inward pointing, the boundary of $A_n$ is a subset of $\calB$. Now the same argument as above, only looking at components contained in $A_n$, shows that there are precisely $2^{r(\calF)}$ tilings of $A_n$ having $\calF$ as field of arrows.
\end{proof}
\begin{figure}[ht]
\fbox{\parbox{\dimexpr \linewidth - 2\fboxrule - 2\fboxsep}{\centering
\subfloat[]{
\begin{tikzpicture}[scale=2]
\draw[gray,thick](1,1)--(0,1);
\draw[gray,thick](1,1)--(1,0);
\draw[gray,thick](0,0)--(1,0);
\draw[gray,thick](0,0)--(0,1);

\node[gray] at(.1,.35){\LARGE $S$};
\draw[fill=black](0,0)circle(1pt);
\node at (-.1,-.1) {\normalsize $P$};
\node at (.4,.6) {$Q$};

\draw[{triangle 90}-](.1,.1)--(.4,.4);
\draw[very thick](0,0)--(.4,0);
\draw[very thick](0,0)--(0,.4);

\draw[-{triangle 90}](.6,.6)--(.9,.9);
\draw[very thick](1,1)--(.6,1);
\draw[very thick](1,1)--(1,.6);

\draw[-{triangle 90}](.6,.4)--(.9,.1);
\draw[very thick](1,0)--(.6,0);
\draw[very thick](1,0)--(1,.4);

\draw[-{triangle 90}](.3,.7)--(.1,.9);
\draw[very thick](0,1)--(.4,1);
\draw[very thick](0,1)--(0,.6);

\draw[gray,thick,dashed](0,.5)--(1,.5);
\draw[gray,thick,dashed](.5,0)--(.5,1);
\draw[fill=black](.5,.5)circle(1pt);
\end{tikzpicture}
}
\subfloat[]{
\begin{tikzpicture}[scale=2]
\draw[gray,thick,dashed](1,1)--(0,1);
\draw[gray,thick](1,.5)--(1,0);
\draw[gray,thick](0,0)--(1,0);
\draw[gray,thick](0,0)--(0,.5);
\draw[gray,thick,dashed](0,1)--(0,.5);
\draw[gray,thick,dashed](1,1)--(1,.5);

\node[gray] at(.1,.35){\LARGE $S$};
\draw[fill=black](0,0)circle(1pt);
\node at (-.1,-.1) {\normalsize $P$};
\node at (.4,.6) {$Q$};

\draw[{triangle 90}-](.1,.1)--(.4,.4);
\draw[very thick](0,0)--(.4,0);
\draw[very thick](0,0)--(0,.4);

\draw[{triangle 90}-](.6,.6)--(.9,.9);
\draw[very thick](.5,.5)--(.5,.9);
\draw[very thick](.5,.5)--(.9,.5);

\draw[-{triangle 90}](.6,.4)--(.9,.1);
\draw[very thick](1,0)--(.6,0);
\draw[very thick](1,0)--(1,.4);

\draw[{triangle 90}-](.3,.7)--(.1,.9);
\draw[very thick](.1,.5)--(.5,.5);

\draw[gray,thick](0,.5)--(1,.5);
\draw[gray,thick](.5,.5)--(1,.5);
\draw[gray,thick](.5,.5)--(.5,1);
\draw[fill=black](.5,.5)circle(1pt);
\end{tikzpicture}
}
\subfloat[]{
\begin{tikzpicture}[scale=2]
\draw[gray,thick](.5,1)--(0,1);
\draw[gray,thick,dashed](1,1)--(1,0);
\draw[gray,thick](0,0)--(.5,0);
\draw[gray,thick](0,0)--(0,1);
\draw[gray,thick,dashed](1,0)--(.5,0);
\draw[gray,thick,dashed](.5,1)--(1,1);

\node[gray] at(.1,.35){\LARGE $S$};
\draw[fill=black](0,0)circle(1pt);
\node at (-.1,-.1) {\normalsize $P$};
\node at (.4,.6) {$Q$};

\draw[{triangle 90}-](.1,.1)--(.4,.4);
\draw[very thick](0,0)--(.4,0);
\draw[very thick](0,0)--(0,.4);

\draw[{triangle 90}-](.6,.6)--(.9,.9);
\draw[very thick](.5,.5)--(.5,.9);
\draw[very thick](.5,.5)--(.9,.5);

\draw[{triangle 90}-](.6,.4)--(.9,.1);
\draw[very thick](.5,.5)--(.5,.1);

\draw[-{triangle 90}](.3,.7)--(.1,.9);
\draw[very thick](0,1)--(.4,1);
\draw[very thick](0,1)--(0,.6);

\draw[gray,thick](.5,.5)--(1,.5);
\draw[gray,thick](.5,.5)--(1,.5);
\draw[gray,thick](.5,0)--(.5,1);
\draw[fill=black](.5,.5)circle(1pt);
\end{tikzpicture}
}
}
}
\caption{Determining components from fields of arrows, Lemma \ref{lem1}}
\label{fig4}
\end{figure}
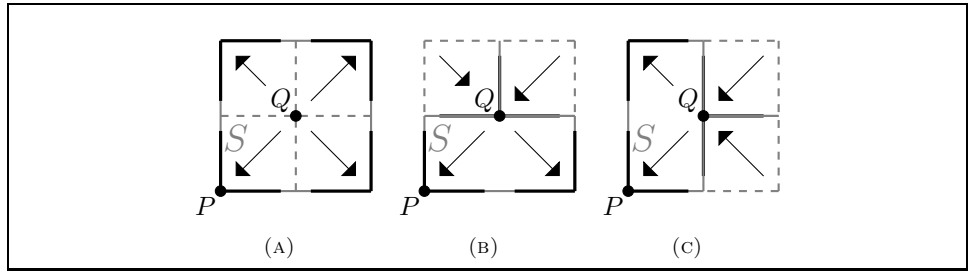

The other ingredient in the proof is the following lemma.
\begin{lemma}\label{lem2}
 For any outward pointing field of arrows on $A_{n+1}$ let $r$ be the number of repelling nodes and $a$ the number of attracting nodes. Then
 $$ r-a=n+1\,.$$
\end{lemma}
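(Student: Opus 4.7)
The plan is a discrete divergence-style double count. For each node $N$ of $A_{n+1}$, let $d(N)$ denote the number of arrows pointing away from $N$ minus the number pointing toward $N$. Each arrow has a unique tail and a unique head among the nodes, contributing $+1 - 1 = 0$ to the total, so $\sum_N d(N) = 0$.

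I would then evaluate this sum by splitting into interior and boundary contributions. Every interior node is adjacent to exactly four lattice squares, and the arrow field condition forces $d(N) = +4$, $-4$, or $0$ according as $N$ is repelling, attracting, or transient, contributing $4r - 4a$ in total. For a boundary node — i.e.\ an extreme point of $A_{n+1}$ — one checks the simple geometric fact that only one of the four lattice squares meeting that corner lies in $A_{n+1}$; the other three have interiors entirely inside $|x|+|y|>n+1$ and so are excluded. Thus each boundary node has exactly one adjacent arrow, and the outward-pointing hypothesis directs that arrow toward the node, giving $d(N) = -1$.

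It remains to count the boundary nodes. Each of the four ``staircase'' sides of $\partial A_{n+1}$ contributes $n+1$ convex corners — for example the NE side gives the points $(k, n+2-k)$ for $k = 1,\dots,n+1$, all of which satisfy $i+j\equiv n\pmod 2$ and are therefore nodes — yielding $4(n+1)$ boundary nodes altogether. Substituting into $\sum_N d(N)=0$ gives $4(r-a) - 4(n+1) = 0$, hence $r-a = n+1$. The only real subtlety is the boundary bookkeeping (identifying all boundary nodes, verifying the unique-adjacent-square property, and counting them); the rest is a one-line counting identity.
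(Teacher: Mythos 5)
Your proposal is correct, but it takes a genuinely different route from the paper's. The paper organizes the count one-dimensionally: the interior nodes lie on $n+1$ lines running south-west to north-east, each carrying $n+2$ arrows; traversing such a line, a backward-to-forward change of direction occurs exactly at a repelling node and a forward-to-backward change exactly at an attracting node (transient nodes give no change), and since the outward hypothesis forces the first arrow of each line to point backward and the last to point forward, each line carries exactly one more repelling than attracting node — summing over the $n+1$ lines gives $r-a=n+1$. You instead run a global discrete-divergence (handshake) count over \emph{all} nodes and \emph{all} arrows of both diagonal families: $\sum_N d(N)=0$ since both endpoints of every arrow are nodes; interior nodes contribute $4r-4a$ (your claim that transient nodes give $d(N)=0$ is immediate, since the four arrows at an interior node form two collinear pairs and a same-direction pair has one head and one tail at $N$); and the $4(n+1)$ extreme points contribute $-1$ each, which you verify correctly via the unique-adjacent-square property and the explicit corner census $(k,n+2-k)$, $k=1,\dots,n+1$, per side. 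What each approach buys: the paper's per-line argument uses only half the arrows and sidesteps all boundary bookkeeping — the outward condition enters only through the first and last arrow of each line, with no need to enumerate convex corners or check adjacency of squares at them — while your argument is symmetric in the two diagonal directions and essentially mechanical once the boundary census is done. In fact the two are closely related: applying your divergence identity separately to a single south-west/north-east line (a b-f change is a local divergence $+2$, an f-b change $-2$, and each of the two line endpoints contributes $-1$) recovers exactly the paper's per-line identity, so your global sum amounts to adding the paper's identities over both families of diagonal lines.
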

\begin{proof}
 The interior nodes of $A_{n+1}$ lie on $n+1$ lines running south-west to north-east, and on each line there are $n+2$ arrows. See Figure \ref{tilings+arrows}(B).

We claim that on each line there is one more repellent node than there are attracting nodes. Since there are $n+1$ lines, this will imply the lemma.

To prove the claim, fix one line $L$ and consider the sequence of arrows on $L$ and their changes of direction when traversing $L$ from south-west to north-east. Each arrow points forward (f) or backward (b). Changes b-f happens precisely at repelling nodes, and changes f-b at attracting nodes. There is no change of direction at transient nodes. Since the first arrow points b and the last arrow points f, there must be one more change b-f than f-b, which was to be shown.
\end{proof}

\subsubsection*{Proof of the recursion \eqref{eqn:ind step}}
Let $\calO$ (resp. $\calI$) be the sets of outward (inward) pointing fields of arrows on $A_{n+1}$. Reversing the direction of each arrow preserves the arrow field condition and therefore defines the map
\begin{equation}
 \label{eqn:flip}
 \flip: \calO\to \calI 
\end{equation}
which is its own inverse, hence bijective. 
For any $\calF\in\calO$, the attracting nodes of $\calF$ are the repelling nodes of 
$\flip(\calF)$, hence Lemma \ref{lem2} implies that $r(\calF) = r(\flip(\calF))+ n+1$. 
Lemma \ref{lem1} then implies that the number of tilings of 
$A_{n+1}$ corresponding to $\calF$ equals $2^{n+1}$ times the number of tilings of $A_n$ corresponding to $\flip(\calF)$. Summing over all $\calF\in\calO$ yields \eqref{eqn:ind step}.

Figures \ref{tilings+arrows} (B) and (C) show an example of an outward pointing field of arrows and its flip.

\begin{remark}\label{remark}
The flip defined in \eqref{eqn:flip} is the same map 
as the domino shuffling map defined in the fourth proof in \cite{ElkKupLarPro:ASMDTII}, although the definition of that map is different and requires proof of well-definedness, which is obvious in our setting.\footnote{In this correspondence, the nodes in this paper correspond to odd vertices in \cite{ElkKupLarPro:ASMDTII}.} Also, as shown there, it is straight-forward to prove a refinement of Theorem \ref{theorem}: The number of domino tilings of $A_n$ having exactly $2k$ horizontal dominoes is $$\binom{\frac{n(n+1)}{2}}{k}$$
for $k=0,\dots,\frac{n(n+1)}2$.
The central observation in the inductive proof of this fact is the following:
The numbers of horizontal $1\times 2$ components corresponding to a field $\calF\in\calO$ (as in the proof of Lemma \ref{lem1}) and to the field $\flip(\calF)$ are the same (in Figure \ref{tilings+arrows} (B) and (C) there are two such components). This is obvious from Figure \ref{fig4} (B) and (C) since the flip rotates these patterns by 180 degrees.
\end{remark}


\begin{thebibliography}{1}

\bibitem{BruKir:ADDHDSN}
Richard~A. Brualdi and Stephen Kirkland.
\newblock Aztec diamonds and digraphs, and {H}ankel determinants of
  {S}chr\"oder numbers.
\newblock {\em J. Combin. Theory Ser. B}, 94(2):334--351, 2005.

\bibitem{ElkKupLarPro:ASMDTI}
Noam Elkies, Greg Kuperberg, Michael Larsen, and James Propp.
\newblock Alternating-sign matrices and domino tilings. {I}.
\newblock {\em J. Algebraic Combin.}, 1(2):111--132, 1992.

\bibitem{ElkKupLarPro:ASMDTII}
Noam Elkies, Greg Kuperberg, Michael Larsen, and James Propp.
\newblock Alternating-sign matrices and domino tilings. {II}.
\newblock {\em J. Algebraic Combin.}, 1(3):219--234, 1992.

\bibitem{EuFu:SPADT}
Sen-Peng Eu and Tung-Shan Fu.
\newblock A simple proof of the {A}ztec diamond theorem.
\newblock {\em Electron. J. Combin.}, 12:Research Paper 18, 8 pp. (electronic),
  2005.

\bibitem{Kas:SDLINDAQL}
P.W. Kasteleyn.
\newblock The statistics of dimers on a lattice: I. {T}he number of dimer
  arrangements on a quadratic lattice.
\newblock {\em Physica}, 27(12):1209 -- 1225, 1961.

\bibitem{KenOko:WID}
Richard Kenyon and Andrei Okounkov.
\newblock What is {$\dots$} a dimer?
\newblock {\em Notices Amer. Math. Soc.}, 52(3):342--343, 2005.

\bibitem{Kok:DTADS}
K.~Kokhas{\cprime}.
\newblock Domino tilings of {A}ztec diamonds and squares.
\newblock {\em Zap. Nauchn. Sem. S.-Peterburg. Otdel. Mat. Inst. Steklov.
  (POMI)}, 360(Teoriya Predstavlenii, Dinamicheskie Sitemy, Kombinatornye
  Metody. XVI):180--230, 298, 2008.

\bibitem{Lai:GADTI}
Tri Lai.
\newblock A generalization of {A}ztec diamond theorem, part {I}.
\newblock {\em Electron. J. Combin.}, 21(1):Paper 1.51, 19, 2014.

\bibitem{TemFis:DPSMER}
H.~N.~V. Temperley and Michael~E. Fisher.
\newblock Dimer problem in statistical mechanics---an exact result.
\newblock {\em Philos. Mag. (8)}, 6:1061--1063, 1961.

\end{thebibliography}
\def\cprime{$'$} \def\cprime{$'$}

\end{document}